\newtheorem{theorem}{Theorem}
\newtheorem{lemma}{Lemma}
\theoremstyle{definition}
\newtheorem{remark}{Remark}
\newtheorem{example}{Example}
\newcommand{\balpha}{{\boldsymbol{\alpha}}}
\newcommand{\be}{\boldsymbol{e}}
\newcommand{\bx}{\boldsymbol{x}}
\newcommand{\bu}{\boldsymbol{u}}
\newcommand{\bv}{\boldsymbol{v}}
\newcommand{\bh}{\boldsymbol{h}}
\newcommand{\bt}{\boldsymbol{t}}
\newcommand{\bo}{\boldsymbol{0}}
\begin{document}

\title{On inequivalent factorizations of a cycle}

\author{G Berkolaiko$^1$, J M Harrison$^{2}$, M Novaes$^3$\\~\\
  \small $^1$ Department of Mathematics, Texas A\&M University,\\
  \small
  College Station, TX 77843-3368, USA\\
  \small
  $^2$ Department of Mathematics, Baylor University,
  Waco, TX 76798-7328, USA\\
  \small
  $^3$ Departamento de F\'isica, Universidade Federal de
  S\~ao Carlos, 13565-905 S\~ao Carlos, SP, Brazil}

\maketitle

\begin{abstract}
  We introduce a bijection between inequivalent minimal factorizations
  of the $n$-cycle $(1\,2\ldots n)$ into a product of smaller cycles
  of given length, on one side, and trees of a certain structure on
  the other.  We use this bijection to count the factorizations with a
  given number of different commuting factors that can appear in the
  first and in the last positions, a problem which has found
  applications in physics.  We also provide a necessary and sufficient
  condition for a set of cycles to be arrangeable into a product
  evaluating to $(1\,2\ldots n)$.
\end{abstract}

\section{Introduction}

Counting factorizations of a permutation into a product of cycles of
specified length is a problem with rich history, dating back at least
to Hurwitz \cite{Hur1891}, and with many important applications, in
particular in geometry (see e.g. \cite{EkeLanShaVai01}).  Our interest
in such problems is driven by applications encountered in physics,
namely semiclassical trajectory-based analysis of quantum transport in
chaotic systems \cite{MueHeuBraHaa07,BerHarNov08}.  The main
ingredient of this analysis is the existence of correlations between
sets of long trajectories connecting an input channel of the quantum
system to an output channel.  The trajectories organize themselves into
families, with the elements of a family differing among themselves
only by their behavior in small regions (see Fig.~\ref{fig:corr}) in
which some of them have crossings while others have anti-crossings.
Enumerating possible configurations of crossing regions and their
inter-connectivity is a question of combinatorial nature.  In a
certain special case it was found in \cite{BerHarNov08} to be
connected to the {\em inequivalent} {\em minimal} factorizations of
the $n$-cycle $(1\,2\ldots n)$ into a product of smaller cycles.

\begin{figure}[t]
\centerline{\includegraphics[clip,scale=1]{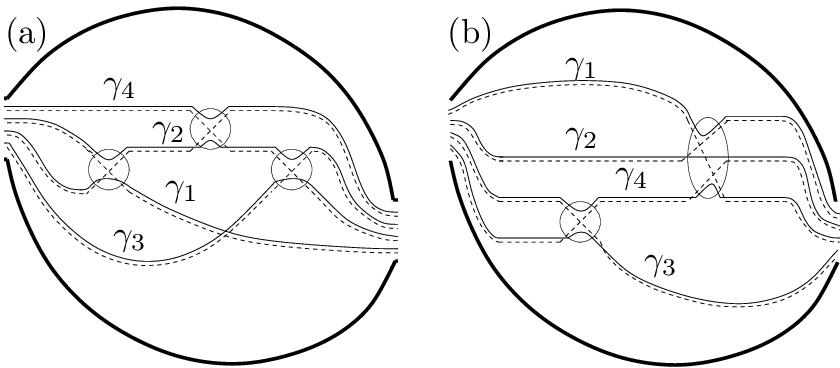}}
\caption{Two schematic examples of correlated sets of classical
  trajectories through a quantum system.  One set of trajectories is
  represented by solid lines, while the other set is drawn in dashed
  lines.  The circles mark the crossing regions, where the
  trajectories from different sets differ significantly: dashed lines
  cross while solid lines narrowly avoid crossings.}
\label{fig:corr}
\end{figure}

The base for our results is a simple and highly pictorial bijection between
said factorizations and plane trees.  This bijection allows us to recover
easily some already known results and to answer new questions about the
structure of the set of factorizations.  To be more specific we need to
introduce some notation.

Let $\sigma_m \cdots \sigma_1$ be a factorization of the cycle $(1\,2\ldots n)$
into a product of smaller cycles. By convention, the first entry of a cycle is
always its smallest element. We say the factorization is of type $\balpha$ if
among $\{\sigma_j\}$ there are exactly $\alpha_2$ 2-cycles (transpositions),
$\alpha_3$ 3-cycles and so on. Let us define 
\begin{equation}
  |\balpha|=\sum_{j\ge 2} \alpha_j, \quad \langle \balpha\rangle=\sum_{j\ge 2}
  (j-1)\alpha_j.
\end{equation} 
The quantity $\balpha$ satisfies
\begin{equation}
  \label{rank} 
  \langle \balpha\rangle\ge n-1.
\end{equation} 
If the above relation becomes equality, the factorization is called
{\em minimal}. We only consider minimal factorizations.

If two factorizations differ only in the order of commuting factors, they are
said to be {\em equivalent}.  An example of two equivalent factorizations is
\begin{equation}
  \label{eq:fact_example}
  (1\,2\,3\,4) = (3\,4)(1\,2)(2\,4) = (1\,2)(3\,4)(2\,4).
\end{equation}
From now on we will refer to equivalence classes of factorizations
simply as factorizations, unless the distinction is of particular
importance. In Theorem 1 we establish a bijection between
factorizations of type $\balpha$ and plane trees with vertex degrees
determined by $\balpha$.  In turn, the trees have been enumerated by
Erd\'elyi and Etherington \cite{ErdEth41} (see also Tutte \cite{Tut64}
and Stanley \cite{StanleyV2}, theorem 5.3.10). We are interested in
counting the factorizations of type $\balpha$. This number will be
denoted\footnote{We use the same notation as \cite{Irv06}; there $H$
  was counting \emph{all} factorizations} by $\widetilde{H}(\balpha)$.
In Theorem 2 we give an equation for its generating function and its
relation to Catalan numbers.

Given an equivalence class of factorizations of the form $\sigma_m
\cdots \sigma_1$, we refer to the number of different cycles that can
appear in the position $\sigma_m$, the {\em number of heads\/} of the
factorization.  Similarly, the {\em number of tails\/} is the number
of cycles that can appear in the position $\sigma_1$.  For example,
the factorization in (\ref{eq:fact_example}) has 2 heads
(transpositions $(1\,2)$ and $(3\,4)$) and 1 tail.  In Theorem 3 we
derive a generating function for the number of inequivalent minimal
factorizations with the specified number of heads and tails, denoted
by $\widetilde{H}_{\bh,\bt}(\balpha)$.  The vectors $\bh=(h_2, h_3,
\ldots)$ and $\bt=(t_2, t_3, \ldots)$ characterize the number of heads
and tails.  Namely, $h_j$ is the number of $j$-cycle heads and $t_j$
is the number of $j$-cycle tails.  The quantity
$\widetilde{H}_{\bh,\bt}(\balpha)$ is of importance in applications to
quantum chaotic transport \cite{BerHarNov08}.  Looking again at
Fig.~\ref{fig:corr}, the vector $\bh$ (corresp.\ $\bt$) counts the
number of crossings that can happen close to the left (corresp.\
right) opening of the system.  For example, $t_2=1$ on the diagram (a)
while $t_2=0$ on the diagram (b), since the $3$-crossing prevents the
$2$-crossing from getting close to the right opening.

Finally, we will give a complete characterization (necessary and
sufficient conditions) for a set $\{\sigma_j\}$ of cycles to give rise
to a minimal factorization of the $n$-cycle $(1\,2\ldots n)$.  This
characterization is given in Theorem~\ref{thm:struct_fact} (the
corresponding result for the factorization into transpositions can be
traced back to Eden and Sch{\"u}tzenberger \cite{EdeSch_mtamkik62}).
Here we only mention one of its corollaries: a factorization
equivalence class is completely determined by the factors.  In other
words, two factorizations composed of the same factors are equivalent.

Some of the results discussed in this paper are already known, although they
have been derived using different methods.  Namely, the number of
inequivalent factorizations into a product of transpositions (i.e.
$\balpha=(n-1, \boldsymbol{0})$) has been obtained by Eidswick \cite{Eid89} and
Longyear \cite{Lon89}.  Springer \cite{Spr96} derived a formula for
$\widetilde{H}(\balpha)$ using a different bijection to trees of the same type.
Irving \cite{Irv06} reproduced the result of Springer using more general
machinery involving cacti. The novelty of our approach is in the type (and
simplicity!) of the bijection used.  Being very visual, our bijection allows us
to obtain answers to new questions, namely to count factorizations with
specified number of heads and tails, which proved to be invaluable in
applications \cite{BerHarNov08,KuiWalPet_prl10,BerKui_jpa10}.

Of other related results we would like to mention Hurwitz
\cite{Hur1891} who suggested a formula for the number of minimal
transitive factorizations (counting equivalent factorizations as
different) of a general permutation into a product of 2-cycles.  A
factorization is called {\em transitive} if the group generated by the
factors $\sigma_1,\ldots, \sigma_m$ acts transitively on the set
$1,\ldots,n$.  However, Hurwitz gave only a sketch of a proof and his
paper was largely unknown to the combinatorialists.  For a special
case of factorizations of the $n$-cycle, the formula was (re-)derived
by D\'enes \cite{Den59}, with alternative proofs given by Lossers
\cite{Los86}, Moszkowski \cite{Mos89}, Goulden and Pepper
\cite{GouPep93}.  For general permutations, Strehl \cite{Str96}
reconstructed the original proof of Hurwitz, filling in the gaps,
while Goulden and Jackson \cite{GouJac97} gave an independent proof.
Generalizations of Hurwitz formula to factorizations into more general
cycles were considered in Goulden and Jackson \cite{GouJac00} and
Irving \cite{Irv06}.  Finally, {\em inequivalent} minimal transitive
factorizations of a permutation consisting of $m=2$ cycles have been
counted in Goulden, Jackson and Latour \cite{GouJacLat01} (into
transpositions) and in Irving \cite{Irv06} (into general cycles).  For
permutations with $m=3$ and $4$ cycles formulas have been found
\cite{BerIrv_prep} using the technique presented in the current
manuscript but generalizations to larger $m$ appear to be difficult.

\section{Visualizing a product of cycles}
\label{sec:viz}

A particularly nice way to visualize a product of transpositions was
suggested in \cite{Gar59} (see also \cite{Bog08}).  A permutation from
$S_n$ is represented as $n$ labeled horizontal lines with several
vertical lines (``shuttles'') connecting some pairs of the horizontal
lines, see Fig.~\ref{fig:diagram}.  The right and left ends of a line
$k$ are labeled with $t_k$ (for ``tail'') and $h_k$ (for ``head'')
correspondingly.  For every horizontal line, start at the right and
trace the line to the left.  Wherever an end of a shuttle is
encountered, trace this shuttle vertically till its other end and then
resume going to the left (towards $h$).  Continue in this manner until
you reach the left end of one of the horizontal lines.  It is clear
that the mapping ``right ends to left ends'' thus described is
invertible and therefore one-to-one.

\begin{figure}[t]
  \centering
  \includegraphics[scale=0.7]{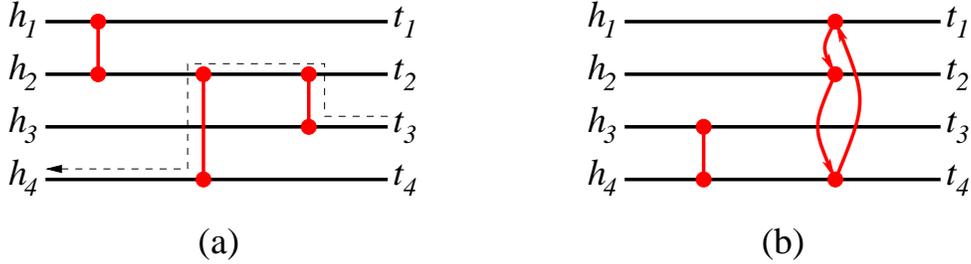}
  \caption{Visualizing a product of transpositions using a ``shuttle
    diagram''.  Each term $(k_1\ k_2)$ in the product corresponds to a
    vertical edge (``shuttle'') connecting lines $k_1$ and $k_2$.  The
    lines are ordered in the same way as the terms in the product.
    Part (a) depicts the product $(1\, 2)(2\, 4)(2\, 3)$ and finding
    the image of $3$ under the resulting permutation (dashed line).
    Part (b) is the representation of $(3\, 4)(1\,2\,4)$, where the
    longer cycle is represented by a directed shuttle.  Note that
    these two shuttle diagrams correspond to the trajectory
    reconnections depicted on Fig.~\ref{fig:corr}: the solid lines
    connecting $h_j$ to $t_j$ correspond to the trajectories drawn in
    solid lines and labelled $\gamma_j$ in Fig.~\ref{fig:corr}.  The
    dashed lines of Fig.~\ref{fig:corr} correspond to the paths from
    $t_j$ to $h_{j+1}$ via the shuttles.}
  \label{fig:diagram}
\end{figure}

In this construction, a shuttle connecting lines $k_1$ and $k_2$ represents
the transposition $(k_1\, k_2)$.  The transpositions are ordered in the same
way as shuttles: right to left.  If the two neighboring transpositions $(k_1\,
k_2)$ and $(k_3\, k_4)$ commute (if and only if all four $k_j$ are distinct),
the corresponding shuttles can be swapped around without affecting the
dynamics.  We can view the resulting diagram as a graph (with horizontal and
vertical edges).  If the diagram represents a factorization of an $n$-cycle,
the graph is connected.  By counting vertices and edges, one concludes that if
a factorization is {\em minimal}, the resulting graph is a tree.

Suppose now that the diagram represents a minimal factorization of the cycle
$(1\, 2 \ldots n)$.  In addition to the right-to-left motion described above we
define the left-to-right motion as going horizontally, ignoring the shuttles.
Then, starting at $t_1$ and going left we arrive to $h_2$.  Going right from
there we arrive to $t_2$ and from there, to $h_3$.  Continuing in this fashion,
we obtain a closed walk with several important features.  It visits the
vertices $t_1, h_2, t_2, \ldots, h_n, t_n, h_1$ in this sequence.  It traverses
each edge of the graph exactly twice: once in each direction (this follows, for
example, from the invertibility of the motion).  Since the graph is a tree, we
conclude that it goes from one vertex to the next one along the shortest
possible route.  This walk traversing the entire tree will play an important
role in what follows.

\begin{figure}[t]
  \centering
  \includegraphics[scale=0.7]{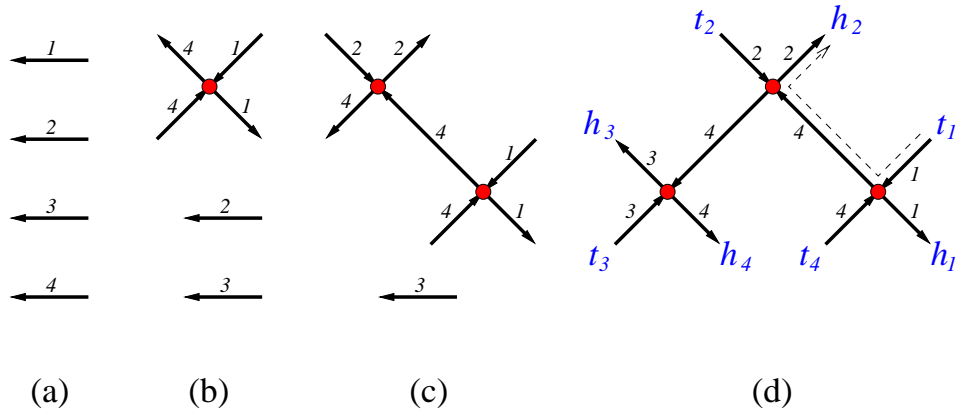}
  \caption{Visualizing a product of transpositions as a directed
    graph.  Depicted are the steps in constructing the graph corresponding to
    the product $(3\, 4)(2\, 4)(1\, 4)$.  To read off the image of $k$ under
    the resulting permutation we start at $t_k$ and follow the directions of
    the edges, choosing the next edge in the counterclockwise order at each
    vertex, until arriving to $h_{\pi(k)}$.  The path traced starting with
    $t_1$ is illustrated by the dashed line in (d).}
  \label{fig:directed}
\end{figure}

Another way to visualize a product of transpositions as a directed plane graph
is illustrated on Fig.~\ref{fig:directed}.  We start with $n$ disjoint
directed edges labeled $1$ to $n$.  For a product $\pi = (k_{j-1}\,
k_j)\cdots(k_3\, k_4)(k_1\, k_2)$, we start by joining the heads of the edges
labeled $k_1$ and $k_2$ at a new vertex and add two more {\em outgoing\/}
edges also labeled $k_1$ and $k_2$.  We arrange them around the vertex so
that, going counter-clockwise, the outgoing edge $k_1$ is followed by the
incoming $k_1$, then by the outgoing $k_2$ and, finally, by the incoming
$k_2$.  At this and all later stages of the procedure for each $k=1,\ldots,n$
there is exactly one ``free'' head of an edge labeled $k$, and one free tail
of possibly different edge also labeled $k$.  We now repeat the procedure
for the transposition $(k_3\, k_4)$, joining free heads of edges marked $k_3$
and $k_4$, adding new outgoing edges to new vertex and ordering the edges in
the similar fashion: outgoing $k_3$, incoming $k_3$, outgoing $k_4$ and
incoming $k_4$.  On Fig.~\ref{fig:directed}(d) we identified the free heads
and tails by labeling them with $h_j$ and $t_j$ correspondingly.

The resulting graph is closely related to the diagrams described earlier.
Namely, the graph is obtained from the diagram by shrinking the shuttle edges
and re-ordering the edges at the newly merged vertices, see
Fig.~\ref{fig:shrinking}(a).  Moreover, the ordering of edges has been
designed so that, to determine the image of $k$ under the product permutation
$\pi$, one would start at $t_k$ and travel along the direction of the edges,
at each vertex taking the next edge in the counterclockwise order, finally
arriving to $h_{\pi(k)}$.  This is illustrated by the dashed line on
Fig.~\ref{fig:directed}(d).  Starting at $h_k$ and going {\em against\/} the
direction of the edges, taking the next counterclockwise edge at each vertex,
will get one to $t_k$.

\begin{figure}[t]
  \centering
  \includegraphics[scale=0.7]{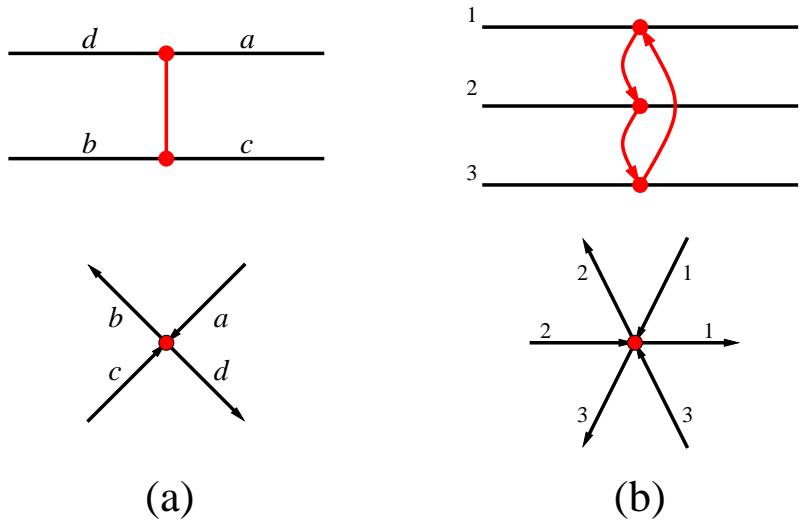}
  \caption{(a) Transforming the diagram representation of a product of
    transpositions into a directed graph representation.  The ``shuttle'' edge
    is shrunk and its end-vertices are merged.  The edges on the left are
    labeled in the order they are traversed by the walk $t_1, h_2, t_2,
    \ldots, h_n, t_n, h_1, t_1$. (b) Visualization of the cycle $(1\,2\,3)$ as
    a ``shuttle diagram'' and as a directed graph.}
\label{fig:shrinking}
\end{figure}

Thus, if $\pi$ is the cycle $(1\,2 \ldots n)$, the corresponding graph
is a tree with $n-1$ vertices of total degree 4 (henceforth called
{\em internal vertices\/}), $2n$ vertices of degree 1 (henceforth
called {\em leaves\/}) and $3n-2$ edges.  The walk $t_1, h_2, t_2,
\ldots, h_n, t_n, h_1, t_1$, discussed in the context of diagrams, now
circumnavigates the entire tree in the counter-clockwise direction.  As
before, it traverses each edge exactly once in each direction.  The
leaves of the tree are thus marked $h_1, t_1, h_2, t_2,
\ldots, h_n, t_n$ going counter-clockwise, see Fig.~\ref{fig:directed}.

The generalization of this construction from a product of
transpositions to a product of general cycles is straightforward.  For
a $m$-cycle $(k_1\, k_2 \ldots k_m)$, the corresponding shuttle is
realized as $m$ directed edges indicating transitions from horizontal
line $k_j$ to horizontal line $k_{j+1}$.  In the directed graph
visualization, the cycle corresponds to a vertex of total degree $2m$,
with outgoing edge marked $k_1$ followed by the incoming edge $k_1$,
then by outgoing edge $k_2$ and so on.  We illustrate this in
Fig.~\ref{fig:diagram}(b) using the cycle $(1\, 2\, 4)$ as an example.

\section{Main Results}
\label{sec:main}

As described in section~\ref{sec:viz}, a factorization (up to
equivalence) of the $n$-cycle into $n-1$ transpositions is naturally
represented as a plane tree with $n-1$ internal vertices of total degree 4,
$2n$ leaves of degree 1 and $3n-2$ edges.  If we designate the leaf $h_1$ as the
root of the tree, the labeling of all other leaves and the directions of edges
can be reconstructed uniquely.  This representation of a factorization as an
undirected rooted plane tree turns out to be a bijection.

\begin{figure}[t]
  \centering
  \includegraphics[scale=0.6]{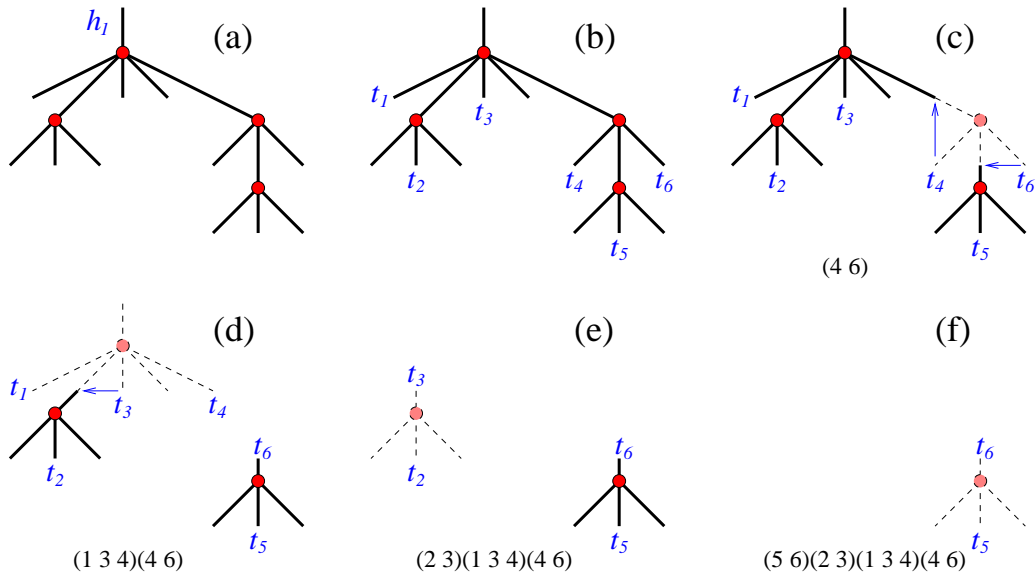}
  \caption{Reconstructing the factorization from an undirected rooted plane
    tree.  Note that at step (d) and (e) one could choose to remove the
    vertex adjacent to leaves $t_4$ and $t_6$ instead.}
  \label{fig:tree_to_fact}
\end{figure}

\begin{theorem}
  \label{thm:bijection}
  Inequivalent minimal factorizations of type $\balpha$ of the $n$-cycle $(1\,2\ldots n)$ are in
  one-to-one correspondence with undirected rooted plane trees having
  $\alpha_j$ vertices of degree $2j$ and $2n$ leaves of degree 1.
\end{theorem}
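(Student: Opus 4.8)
The plan is to realize the map of the theorem as the composition of the construction already described in Section~\ref{sec:viz} with the operation of forgetting the edge directions and all leaf labels except the root $h_1$. Call this forward map $\Phi$. The first thing to check is that $\Phi$ lands in the stated class of trees: an $m$-cycle factor produces an internal vertex of total degree $2m$, so a factorization of type $\balpha$ yields exactly $\alpha_j$ vertices of degree $2j$; and the leaf count is forced to be $2n$ by minimality. Indeed, a tree with $\alpha_j$ vertices of degree $2j$ and $L$ leaves has $\sum_j\alpha_j+L-1$ edges, and equating the degree sum to twice the edge count gives $L=2\langle\balpha\rangle+2$, which equals $2n$ exactly when $\langle\balpha\rangle=n-1$. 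Rather than prove injectivity and surjectivity separately, I would exhibit an explicit inverse $\Psi$ and verify that $\Phi$ and $\Psi$ are mutually inverse, which yields the bijection at once.

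Next I would show that $\Phi$ is well defined on equivalence classes, i.e.\ that swapping two adjacent commuting factors leaves the tree unchanged. The key observation is that in a \emph{minimal} factorization any two factors share at most one index, since two shuttles meeting in two common lines would create a cycle and contradict the tree property established in Section~\ref{sec:viz}. Consequently two factors commute if and only if their supports are disjoint. When two adjacent factors are disjoint, the directed-graph construction merges two disjoint collections of free heads, so performing the two merges in either order produces the identical directed, labelled, plane tree; hence $\Phi$ descends to equivalence classes, and equivalent factorizations give the same tree.

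For the inverse $\Psi$, I would take an undirected rooted plane tree $T$ of the prescribed degree type, rooted at a leaf to be named $h_1$, and reconstruct everything in three forced steps. First, the counter-clockwise boundary walk (Euler tour) started at the root visits the $2n$ leaves in a unique cyclic order, which I label $h_1,t_1,h_2,t_2,\ldots,h_n,t_n$; planarity plus the choice of root make this labelling canonical. Second, I orient the edges: the root edge is directed into $h_1$, and the orientation is propagated by the alternating in/out rule around each internal vertex. Since every internal vertex has even degree the rule is locally consistent, and since $T$ is acyclic the propagation meets no obstruction and determines all directions uniquely. Third, I recover each edge's label by following its label-$k$ chain to the corresponding pair $t_k,h_k$, and read off each degree-$2j$ vertex as the $j$-cycle determined by its counter-clockwise pattern of outgoing edges. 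The reading rule of Section~\ref{sec:viz} then gives a permutation, and I would argue it is exactly $(1\,2\ldots n)$ because the leaf order along the boundary walk places $t_k$ immediately before $h_{k+1}$, forcing $k\mapsto k+1$.

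Finally I would verify $\Psi\circ\Phi=\mathrm{id}$ and $\Phi\circ\Psi=\mathrm{id}$: starting from a factorization, the boundary walk of $\Phi(\text{fact})$ reproduces the original leaf labels and the alternating orientation reproduces the original edge directions, so $\Psi$ returns the original factors up to reordering, i.e.\ the same equivalence class; conversely, drawing the factorization $\Psi(T)$ rebuilds $T$ together with its planar embedding and root. I expect the genuine technical heart to be the compatibility of the two independent determinations used in $\Psi$: the leaf labelling coming from the boundary walk must agree with the $h/t$-roles dictated by the independently propagated orientation (every $h$-leaf receiving an in-edge and every $t$-leaf an out-edge), and it is precisely this agreement that makes the read-off product equal to $(1\,2\ldots n)$ rather than some other permutation of the same cycle type. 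Establishing this compatibility is where the planar counter-clockwise structure and minimality are essential, and it is the step I would expect to require the most care.
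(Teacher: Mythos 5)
Your forward map, the degree and leaf counts, and the well-definedness argument are all sound; the observation that in a minimal factorization two factors commute exactly when their supports are disjoint (sharing two indices would force a cycle in the graph) is a correct and useful elaboration of what the paper leaves implicit. Where you diverge from the paper is in the inverse: the paper inverts the map by an iterative peeling --- label the leaves $h_1,t_1,\ldots,t_n$ counterclockwise from the root, find a vertex of degree $2j$ whose $j$ $t$-neighbours are all leaves (pigeonhole), read its $t$-labels off as the rightmost remaining factor, delete it, let the newly created leaves inherit labels from their counterclockwise neighbours, and repeat --- whereas you propose a one-shot global reconstruction (boundary-walk labelling, orientation propagated by the alternating rule, factors read directly off the internal vertices).

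The gap is in your surjectivity step, and it is not only the ``compatibility'' you flag. Granting that compatibility (which does hold: along each boundary-walk segment between consecutive leaves the alternating rule forces every edge to be traversed forward, so in- and out-leaves alternate around the boundary and the reading-rule path from $t_k$ is precisely the walk segment ending at $h_{k+1}$), what you have shown is that the \emph{directed graph} computes the permutation $(1\,2\ldots n)$ under the reading rule. But an element of the domain is an \emph{ordered} product $\sigma_m\cdots\sigma_1$, and your $\Psi$ never produces an ordering of the recovered cycles, nor a proof that some ordering of them multiplies out to $(1\,2\ldots n)$. The fact that the reading rule on a directed plane tree computes the product of its vertex cycles is established by the forward construction only for trees already known to arise from factorizations; invoking it for an arbitrary tree is circular. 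This is exactly what the paper's peeling supplies: it extracts the factors in an explicit right-to-left order and verifies inductively that applying the cycles in that order, together with the deletion-relabelling, transports $t_j$ to $h_{j+1}$. To close your argument you would need an analogous lemma (say, induction on the number of internal vertices, removing a vertex all of whose $t$-side edges end in leaves), at which point you have essentially reconstructed the paper's inversion.
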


\begin{proof}
  The mapping of factorization equivalence classes to trees, described
  in section~\ref{sec:viz} is well defined.  Indeed, the construction
  steps corresponding to commuting factors also commute.  We need to
  show that this mapping is invertible and onto.

  The mapping can be inverted by taking the following steps (see
  Fig.~\ref{fig:tree_to_fact} for an example):
  \begin{enumerate}
  \item Label the leaves of the tree with $h_1, t_1, h_2, \ldots, t_n$ starting
    with the root and going counterclockwise, Fig.~\ref{fig:tree_to_fact}(b)
    (to avoid clutter we will omit the $h$-labels).
  \item \label{itm:start_loop} For some value of $j$, choose a vertex with
      degree $2j$ which has $j$ $t$-leaves adjacent to it.  Such a vertex
      exists by pigeonhole principle.  The indices of the $t$-leaves give
      the next (in the right to left order) factor in the expansion,
      Fig.~\ref{fig:tree_to_fact}(c).
  \item Remove the vertex.  The edges connecting the vertex to leaves are
    removed entirely.  The edges connecting the vertex to other vertices, if
    any, are cut in half.  This creates one or more new leaves and their labels
    are inherited from the leaves neighboring them in the counterclockwise
    direction, Fig.~\ref{fig:tree_to_fact}(c).
  \item Repeat from step \ref{itm:start_loop},
    Fig.~\ref{fig:tree_to_fact}(d)-(f).
  \end{enumerate}

Notice that the number of choices one has when first running step $2$
corresponds to the total number of tails.

  To verify that the mapping is onto we have to check that the above inversion
  applied to any tree produces a factorization of the $n$-cycle $(1\,2\ldots
  n)$.  To this end we observe that the deletion-relabeling process
  coupled with the application of the cycles read at step \ref{itm:start_loop}
  transports an object initially at $t_j$ to the leaf $h_{j+1}$ for all $j$
  (assuming the convention $n+1\equiv 1$).
\end{proof}

Before we proceed to counting trees, we would like to present a short
corollary of the above theorem.

\begin{lemma}
  \label{lem:increasing}
  Let $\sigma=(s_1\ \ldots\ s_{|\sigma|})$ be a cycle in a
  factorization of the $n$-cycle $(1\,2\ldots n)$.  Then $\sigma$ is
  increasing: $s_1 < s_2 < \ldots < s_{|\sigma|}$.
\end{lemma}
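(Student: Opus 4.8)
The plan is to read the lemma off the directed plane tree associated to the factorization by Theorem~\ref{thm:bijection}, translating the numerical inequality $s_1 < s_2 < \cdots < s_{|\sigma|}$ into a statement about the counterclockwise order in which the leaves are visited by the circumnavigating walk of Section~\ref{sec:viz}. Recall that the leaves are encountered, counterclockwise, in the cyclic order $h_1, t_1, h_2, t_2, \ldots, h_n, t_n$; in particular the tail-leaves alone appear in the cyclic order $t_1, t_2, \ldots, t_n$. Since the convention writes the smallest element of $\sigma$ first, it suffices to show that the indices $s_1, \ldots, s_{|\sigma|}$ occur as a \emph{cyclically increasing} subsequence of $1, 2, \ldots, n$, i.e.\ that the tails $t_{s_1}, \ldots, t_{s_{|\sigma|}}$ are met by the walk in that cyclic order.

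Let $v$ be the internal vertex that corresponds to $\sigma=(s_1\ \ldots\ s_{|\sigma|})$. By the construction of the directed graph, the edges incident to $v$ read counterclockwise are the outgoing edge labelled $s_1$, the incoming edge labelled $s_1$, the outgoing edge labelled $s_2$, the incoming edge labelled $s_2$, and so on. The first observation I would establish is that, for each fixed label $k$, the edges carrying that label form the unique path in the tree joining $t_k$ to $h_k$ and passing through exactly those vertices whose cycles contain $k$: each such vertex converts its incoming $k$-edge into an outgoing $k$-edge, the initial segment issues from $t_k$, and the terminal segment ends at $h_k$. Consequently, deleting $v$ splits the tree into $2|\sigma|$ subtrees, and the one hanging from the incoming edge labelled $s_i$ is precisely the one containing the tail-leaf $t_{s_i}$ (while $h_{s_i}$ lies in the subtree hanging from the outgoing $s_i$-edge).

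The second ingredient is planarity: because the walk explores each subtree attached to $v$ completely before returning to $v$ and proceeding to the next edge counterclockwise, the leaves lying beyond a given edge of $v$ form a contiguous block in the global cyclic order $h_1, t_1, \ldots, h_n, t_n$, and these blocks appear in the counterclockwise order of the edges around $v$. Combining the two ingredients, the tail-leaves $t_{s_1}, t_{s_2}, \ldots, t_{s_{|\sigma|}}$ sit in the blocks attached to the incoming edges $s_1, s_2, \ldots, s_{|\sigma|}$, which occur in exactly this cyclic order around $v$; hence $t_{s_1}, \ldots, t_{s_{|\sigma|}}$ is a cyclic subsequence of $t_1, t_2, \ldots, t_n$. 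Rotating so that the smallest index comes first then gives $s_1 < s_2 < \cdots < s_{|\sigma|}$.

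I expect the main obstacle to be the first observation --- pinning down that the incoming edge labelled $s_i$ really leads to the block containing $t_{s_i}$ rather than $h_{s_i}$. This is where the orientation conventions of the construction must be used carefully; everything afterwards is a formal consequence of planarity and the fixed cyclic labelling of the leaves. An alternative, which avoids tracking orientations but requires a little more bookkeeping, is to argue inductively along the deletion-relabelling inversion of Theorem~\ref{thm:bijection}: the cycle read at each step is governed by the tails adjacent to the deleted vertex, and one checks that the relabelling preserves the property that the tail-leaves are met in cyclically increasing order. I would present the directed-graph argument as the main line and mention the inductive variant as a remark.
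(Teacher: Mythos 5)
Your argument is correct, but its main line is a genuinely different route from the paper's. The paper proves the lemma through the \emph{inverse} map in the proof of Theorem~\ref{thm:bijection}: factors are read off the tree by repeatedly deleting a vertex all of whose $t$-leaves are present, and the one-sentence key observation is that the deletion-and-label-inheritance step preserves the counterclockwise ordering of the labels, so the tails adjacent to each deleted vertex carry cyclically increasing indices. That is precisely the ``inductive variant'' you relegate to a closing remark. Your main argument instead works entirely with the \emph{forward} construction of Section~\ref{sec:viz}: the $k$-labelled edges form the directed $t_k$--$h_k$ path, so the subtree hanging off the incoming $s_i$-edge of the vertex $v$ representing $\sigma$ contains $t_{s_i}$; planarity of the circumnavigating walk makes each such subtree a contiguous block of leaves, with blocks ordered around the global cycle $h_1,t_1,\ldots,h_n,t_n$ exactly as the edges are ordered around $v$; and the incoming edges appear around $v$ in the cyclic order $s_1,\ldots,s_{|\sigma|}$ by construction. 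The step you flag as the main obstacle --- that the incoming $s_i$-edge leads to $t_{s_i}$ rather than $h_{s_i}$ --- does hold under the paper's conventions, because only \emph{heads} of edges are ever merged into new vertices, so the free tail of the $k$-path is always the original leaf $t_k$ and lies upstream of every vertex whose cycle contains $k$. Both proofs ultimately rest on the same fact (the counterclockwise cyclic labelling of the leaves), but yours needs only the forward map and localizes the claim at the single vertex $v$, at the price of more bookkeeping about orientations; the paper's is shorter but leaves the induction and the preservation of counterclockwise order essentially asserted rather than argued.
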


\begin{proof}
  When reading the factorization off the tree as described in the proof of
  Theorem~\ref{thm:bijection}, the labels are assigned initially to the leaves
  of the tree in the counterclockwise order.  The operation of removing a
  vertex and inheriting the labels preserves this ordering of the labels.  If
  a new connected component is created by the removal operation, its labels
  are also ordered counterclockwise.  Thus, when a vertex $\sigma$ is removed,
  the labels of its leaves, $t_{s_1}, \ldots, t_{s_{|\sigma|}}$ satisfy $s_1 <
  s_2 <\ldots < s_{|\sigma|}$ (provided the starting index $s_1$ is chosen
  appropriately).  Thus each factor read off the tree is an increasing cycle.
\end{proof}

\begin{theorem}
  \label{thm:gf_general}
  The generating function of the number
  $\widetilde{H}(\balpha)$, defined by
  \begin{equation}
    \label{eq:gen_fun_simple}
    \xi(\bx) = \sum_{\balpha} \widetilde{H}(\balpha) x_2^{\alpha_2}
    x_3^{\alpha_3} \cdots,
  \end{equation}
  where the sum over $\balpha$ is unrestricted, satisfies the
  recurrence relation
  \begin{equation}
    \label{eq:gf_general}
    \xi(\bx) = 1 + x_2 \xi^3(\bx) + x_3 \xi^5(\bx) + \ldots
  \end{equation}
  It follows that 
  \begin{equation}
    \sum_{\balpha:\langle \balpha\rangle=n}
    (-1)^{|\balpha|+n}\widetilde{H}(\balpha)=\frac{1}{n+1}\binom{2n}{n}
    = c_n,
  \end{equation}
  where $c_n$ is the $n$-th Catalan number.
\end{theorem}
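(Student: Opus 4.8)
The plan is to derive the recurrence \eqref{eq:gf_general} by decomposing each tree at its root, and then to read off the Catalan identity from a single well-chosen specialization of the variables $x_j$. By Theorem~\ref{thm:bijection}, $\widetilde{H}(\balpha)$ enumerates undirected rooted plane trees whose internal vertices have degrees prescribed by $\balpha$; recall that the number of leaves, $2n=2\langle\balpha\rangle+2$, is then forced. First I would delete the root leaf together with its incident edge and inspect its unique neighbour $v$. If the tree is the single edge joining two leaves, there is no internal vertex and we obtain the constant term $1$ (the case $\balpha=\bo$, $n=1$). Otherwise $v$ is internal, of some degree $2j$, and contributes a weight $x_j$. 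Exactly one of the $2j$ edges at $v$ pointed toward the root; reading the remaining $2j-1$ edges in counterclockwise order exhibits $v$ together with $2j-1$ ordered subtrees, each of which is again a rooted plane tree of the same family and hence enumerated by $\xi$. Because planarity linearly orders these branches, this configuration contributes $x_j\,\xi^{2j-1}$, and summing over $j\ge2$ gives \eqref{eq:gf_general}.

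For the identity I would substitute $x_j=-t^{j-1}$ for every $j\ge2$ into \eqref{eq:gen_fun_simple}. Since $x_2^{\alpha_2}x_3^{\alpha_3}\cdots=(-1)^{|\balpha|}t^{\langle\balpha\rangle}$ and only finitely many $\balpha$ satisfy $\langle\balpha\rangle=n$, the resulting series
\begin{equation*}
  F(t):=\xi(\bx)\big|_{x_j=-t^{j-1}}
      =\sum_{n\ge0}t^{\,n}\!\!\sum_{\langle\balpha\rangle=n}(-1)^{|\balpha|}\widetilde{H}(\balpha)
\end{equation*}
is a well-defined power series in $t$ whose coefficients are exactly the sums we wish to evaluate. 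Applying the same substitution to \eqref{eq:gf_general} and summing the geometric series gives
\begin{equation*}
  F = 1 - F\sum_{j\ge2}\bigl(tF^2\bigr)^{j-1} = 1 - \frac{tF^3}{1-tF^2},
\end{equation*}
and clearing the denominator cancels the $tF^3$ terms, collapsing this to the quadratic $F = 1 - tF^2$.

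It remains to recognize $F$. The Catalan generating function $C(y)=\sum_{n\ge0}c_n y^n$ is the solution of $C=1+yC^2$ with $C(0)=1$; setting $y=-t$ shows that $C(-t)$ satisfies $G=1-tG^2$ with $G(0)=1$, the very equation obeyed by $F$. By uniqueness of the power-series solution, $F(t)=C(-t)=\sum_{n\ge0}(-1)^n c_n t^{\,n}$. Comparing coefficients of $t^n$ yields $\sum_{\langle\balpha\rangle=n}(-1)^{|\balpha|}\widetilde{H}(\balpha)=(-1)^n c_n$, and multiplying by $(-1)^n$ gives the claimed identity. I expect the only step needing real care to be the root decomposition: one must check that every one of the $2j-1$ branches left after removing $v$ is genuinely an object counted by $\xi$ --- in particular that a branch consisting of a single leaf reproduces the unit term --- and that the planar structure makes the branches an \emph{ordered} tuple, so that the exponent $2j-1$ and the product $\xi^{2j-1}$ (rather than a symmetrized count) are correct. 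The remaining manipulations are routine.
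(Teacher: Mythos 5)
Your proposal is correct and follows essentially the same route as the paper: the same decomposition at the internal vertex adjacent to the root into $2j-1$ ordered subtrees yielding $x_j\xi^{2j-1}$, and the same specialization $x_j=-t^{j-1}$ followed by summing the geometric series to reduce to the Catalan quadratic $F=1-tF^2$. The only cosmetic difference is that the paper clears the denominator by multiplying the unsummed series equation by $1-r\widetilde{\xi}^2$, which is the same algebra.
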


The above statement is a simple consequence of the bijection between
factorizations and trees and the known results enumerating the trees,
see Erd\'elyi and Etherington \cite{ErdEth41}, Tutte \cite{Tut64} or
Stanley \cite[Theorem 5.3.10]{StanleyV2}.  We will give a short proof
in section~\ref{sec:counting} to introduce the methods used in the
next result. 

For factorizations with specified numbers of heads and tails we have
\begin{theorem}
  \label{thm:gf_heads_tails}
  Let $g(\bx,\bv,\bu)$ be the generating function of the number
  $\widetilde{H}_{\bh,\bt}(\balpha)$ of inequivalent minimal factorizations of
  the $n$-cycle $(1\,2\ldots n)$ of type $\balpha$ with specified number of heads and
  tails, defined by
  \begin{equation*}
    g(\bx,\bv,\bu) =  \sum_{\balpha}
    \sum_{\bh=(0,0,\ldots)}^{\balpha} \sum_{\bt=(0,0,\ldots)}^{\balpha}
    \widetilde{H}_{\bh,\bt}(\balpha)
    x_2^{\alpha_2} u_2^{h_2} v_2^{t_2}
    x_3^{\alpha_3} u_3^{h_3} v_3^{t_3} \cdots
  \end{equation*}
  Then $g(\bx,\bv,\bu)$ can be found as
  \begin{align}
    g &= f - \sum_{n\geq2} x_n (1-u_n) f^n \label{eq:rec_full_unsym}\\
    \label{eq:rec_full_sym}
    &= f\hat{f} -  \sum_{n\geq2} x_n \left(f\hat{f}\right)^n,
  \end{align}
  where $f$ satisfies the recursion relation
  \begin{equation}
    \label{eq:reduced_rec}
    f(\bx,\bv,\bu) = 1
    + \sum_{n\geq2} x_n \left(f^n - 1 + v_n\right) \hat{f}^{n-1}
  \end{equation}
  and $\hat{f}$ is obtained from $f$ by exchanging the roles of $u$
  and $v$,
  \begin{equation*}
    \hat{f}(\bx,\bv,\bu) = f(\bx,\bu,\bv).
  \end{equation*}
\end{theorem}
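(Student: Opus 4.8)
The plan is to set up the generating function by decomposing each plane tree according to the structure at the root, tracking simultaneously the vertex-degree type (via $\bx$), the head-cycles that can be read off first (via $\bu$), and the tail-cycles that can be read off last (via $\bv$). Recall from Theorem~\ref{thm:bijection} that factorizations of type $\balpha$ correspond to rooted plane trees with $\alpha_j$ vertices of degree $2j$ and $2n$ leaves. The root leaf is $h_1$, and going counterclockwise the leaves are labelled $h_1,t_1,h_2,t_2,\ldots$. A head of the factorization is a cycle that can appear in position $\sigma_m$ (read first, i.e.\ \emph{last} in the counterclockwise deletion order), and a tail is a cycle that can appear in position $\sigma_1$ (read first in the deletion process, i.e.\ an internal vertex all of whose non-root neighbours are $t$-leaves). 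The key structural fact, which I would establish first, is a local criterion: an internal vertex is eligible to be a tail precisely when it is a ``leaf-cluster'' of the appropriate shape near the start of the walk, and eligible to be a head by the symmetric condition near the end. This is exactly the observation already flagged after step~2 of the proof of Theorem~\ref{thm:bijection} --- the number of tails equals the number of vertices available at the first application of step~2.

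First I would define an auxiliary series $f(\bx,\bv,\bu)$ counting \emph{planted} (singly-rooted-edge) trees, where the root edge's subtree is decomposed by conditioning on the degree $2n$ of the first internal vertex encountered. Such a vertex has $n-1$ further subtrees hanging off it (each again a planted tree, contributing $f$) interleaved with $t$-leaves; the generating function contribution is $x_n$ times a product of $n-1$ copies of $f$ (for the ``downstream'' subtrees, read with roles of heads/tails swapped, hence $\hat f$) and one distinguished factor governing whether this vertex itself is a valid tail. The subtlety producing the $(f^n - 1 + v_n)$ term in~(\ref{eq:reduced_rec}) is that a vertex contributes a tail-marking $v_n$ exactly when all $n$ of its leaf-slots are genuine $t$-leaves (the all-leaves configuration), whereas in every other configuration at least one slot is occupied by a nontrivial subtree and no tail-mark is earned: the expression $f^n$ enumerates all configurations, subtracting $1$ removes the all-leaves term (which carries no further subtree), and adding $v_n$ reinserts it with the correct tail-weight. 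I would verify this bookkeeping carefully, since getting the ``$-1+v_n$'' exactly right is where sign/off-by-one errors lurk.

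Next I would assemble the full generating function $g$ from $f$ and $\hat f$ by accounting for the root of the whole tree. Equation~(\ref{eq:rec_full_sym}) has the transparent interpretation that a full tree is a pair of planted trees glued at the root region --- the ``head side'' contributing $f$ and the ``tail side'' contributing $\hat f$ --- with the subtracted sum $\sum_{n\ge2} x_n (f\hat f)^n$ correcting for the central vertex so that it is not double-counted or is counted with the correct head/tail weight. The equivalent form~(\ref{eq:rec_full_unsym}) is then obtained by an algebraic manipulation using the recursion~(\ref{eq:reduced_rec}) for $f$: I would substitute the defining relation for $f$ into $g = f\hat f - \sum x_n(f\hat f)^n$ and simplify, showing it collapses to $g = f - \sum_{n\ge2} x_n(1-u_n)f^n$. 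The $(1-u_n)$ factor encodes that a head-eligible vertex of degree $2n$ receives weight $u_n$ rather than $1$, so the correction $-x_n(1-u_n)f^n$ replaces an unmarked head contribution with its $u_n$-marked counterpart. Establishing the identity~(\ref{eq:rec_full_unsym}) $=$~(\ref{eq:rec_full_sym}) is essentially the $u\leftrightarrow v$ symmetry of the construction made algebraic.

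The main obstacle I anticipate is not the algebra of combining $f$ and $\hat f$, but rather the precise justification of the local head/tail criterion --- namely, that a head (respectively tail) is exactly a vertex whose entire complement of non-structural neighbours are $h$-leaves (respectively $t$-leaves), and that these conditions can be read off independently, subtree by subtree, without global interference. Because heads are read at the end of the deletion process and tails at the start, one must argue that the set of possible heads and the set of possible tails are determined purely by the tree's local structure and do not depend on the (many) intermediate choices in the deletion algorithm. I would handle this by invoking Lemma~\ref{lem:increasing} and the order-preserving property of the relabeling: since the counterclockwise label order is preserved throughout, a vertex surrounded by $t$-leaves at the outset remains a valid first-factor regardless of deletion order, and symmetrically for heads at the final step. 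Once this independence is secured, the generating-function decomposition is forced and the two displayed forms of $g$ follow by the symmetric construction and the algebraic substitution described above.
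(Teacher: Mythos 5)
Your proposal follows essentially the same route as the paper: decompose the rooted tree at its top internal vertex, observe that the subtrees in even positions have the roles of the $h$- and $t$-labels interchanged (hence the alternation between $f$ and $\hat f$), and encode the condition ``this vertex is a tail iff all $n$ of its odd slots are genuine $t$-leaves'' by the correction $f^n-1+v_n$; this is exactly the paper's derivation of (\ref{eq:reduced_rec}), and your concern that the head/tail sets might depend on the deletion order is resolved the same way the paper resolves it, via the order-preserving relabeling underlying Lemma~\ref{lem:increasing}.

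The one step that does not survive scrutiny as stated is your plan to establish (\ref{eq:rec_full_sym}) \emph{first}, through the ``transparent interpretation'' of $f\hat f$ as a pair of planted trees glued at the root: the top vertex carries $2n-1$ subtrees, not two, and there is no evident bijection realizing $f\hat f-\sum_{n\ge2} x_n(f\hat f)^n$ directly as a count of full trees. The paper proceeds in the opposite order: it first proves (\ref{eq:rec_full_unsym}) combinatorially --- the top vertex is a head exactly when all even-indexed subtrees are empty, in which case each $\hat f$-factor degenerates to $1$ and the correction term is $-x_n(1-u_n)f^n$ (note $f^n$, not $(f\hat f)^n$) --- and then obtains (\ref{eq:rec_full_sym}) by pure algebra, multiplying the $u\leftrightarrow v$-swapped recursion for $\hat f$ by $f$ and rearranging. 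Since you already state the correct combinatorial meaning of the $(1-u_n)$ term, your argument is repaired simply by reversing the order: derive (\ref{eq:rec_full_unsym}) from the head count of the top vertex and treat (\ref{eq:rec_full_sym}) as an algebraic consequence rather than a combinatorial identity.
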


Moving on to the characterization of all possible sets of factors, we
remind the reader that a cycle $(s_1\,s_2\ldots s_{|\sigma|})$ is
called {\em increasing\/} if $s_1 < s_2 <\ldots < s_{|\sigma|}$.  Here
by $|\sigma|$ we denote the size of the cycle $\sigma$.

Another natural way to visualize factorizations of cycles is to draw
the factors on a circle.  We start by arranging the numbers $1,\ldots,
n$ on the circle in the anti-clockwise direction.  For a cycle
$(s_1\,s_2\ldots s_{|\sigma|})$ we will draw curves \emph{inside} the
circle connecting $s_j$ to $s_{j+1}$ for $j=1,\ldots,|\sigma|$, with
the last curve connecting $s_{|\sigma|}$ to $s_1$.  We draw the curves
without intersections (apart from at the vertices $s_j$), which is
possible if and only if the cycle is increasing or decreasing.  For
uniformity, in case of a transposition $\sigma = (s_1, s_2)$, we draw
two curves.  An example with all cycles in a factorization drawn on a
circle is shown on Fig.~\ref{fig:cactus}.

\begin{figure}[t]
  \centering
  \includegraphics[scale=0.6]{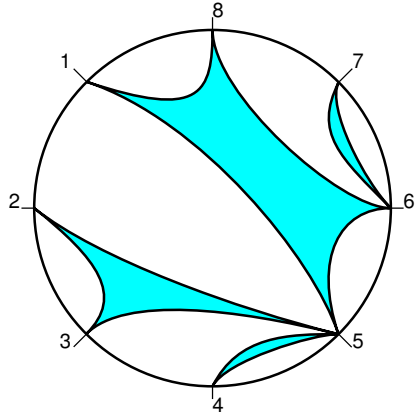}
  \caption{Cycles $(1\,5\,6\,8)$, $(2\,3\,5)$, $(4\,5)$ and $(6\,7)$
    drawn on a circle.  The cycles satisfy conditions of
    Theorem~\ref{thm:struct_fact} and can be arranged into a
    factorization $(1\,2\,3\,4\,5\,6\,7\,8) =
    (4\,5)(2\,3\,5)(1\,5\,6\,8)(6\,7)$.}
  \label{fig:cactus}
\end{figure}

\begin{theorem}
  \label{thm:struct_fact}
  The cycles $\sigma_1,\ \sigma_2,\ldots,\ \sigma_m$ can be arranged
  into a factorization of the $n$-cycle $(1\,2\ldots n)$ if and only
  if the following conditions hold
  \begin{enumerate}
  \item \label{itm:cover_all} any number $j=1,\ldots, n$ belongs to at
    least one of the cycles,
  \item \label{itm:increasing} all cycles are increasing,
  \item \label{itm:no_intersect} the cycles can be drawn on a circle
    without intersecting themselves and one another (apart from at the
    vertices $j=1,\ldots,n$ on the circle),
  \item \label{itm:simply_conn} the closed union of the resulting
    curvilinear polygons is simply connected.
  \end{enumerate}
  All factorizations formed out of these cycles are equivalent.
\end{theorem}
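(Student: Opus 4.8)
The plan is to prove the two directions separately and then obtain the final equivalence claim as a consequence of the bijection of Theorem~\ref{thm:bijection}.

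For necessity, suppose $\sigma_m\cdots\sigma_1=(1\,2\ldots n)$. Condition~\ref{itm:cover_all} is immediate, since the $n$-cycle moves every element and hence every $j\in\{1,\ldots,n\}$ must be displaced by some factor. Condition~\ref{itm:increasing} is exactly Lemma~\ref{lem:increasing}. For the two topological conditions~\ref{itm:no_intersect} and~\ref{itm:simply_conn} I would pass to the plane tree associated to the factorization by Theorem~\ref{thm:bijection}. The circle diagram and the tree encode the same data: drawing each increasing $j$-cycle as a non-crossing $j$-gon produces a \emph{cactus} whose bounded faces are the polygons, and the counterclockwise contour walk around this cactus visits the circle vertices in the order $t_1,t_2,\ldots,t_n$, traversing every polygon side once from each side, exactly as the walk of section~\ref{sec:viz} circumnavigates the tree. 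Under this correspondence the polygons play the role of the internal vertices (a $j$-gon corresponds to a degree-$2j$ vertex) and two polygons share a circle vertex precisely when the matching internal vertices are adjacent in the tree. Because the tree is connected and acyclic, the polygon arrangement cannot cross itself (giving~\ref{itm:no_intersect}) and cannot enclose a region bounded by more than one polygon (giving~\ref{itm:simply_conn}).

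For sufficiency, I would run this correspondence backwards. Assuming~\ref{itm:cover_all}--\ref{itm:simply_conn}, condition~\ref{itm:increasing} guarantees that each $\sigma_i$ has a unique non-crossing drawing, so the cycles determine a well-defined circle diagram; conditions~\ref{itm:no_intersect} and~\ref{itm:simply_conn} say its polygons form a cactus whose polygon-adjacency graph is a tree. A short Euler-characteristic computation ($V=n$ vertices, $E=\langle\balpha\rangle+|\balpha|$ edges and, by simple-connectedness, $F=|\balpha|+1$ faces) pins down $\langle\balpha\rangle=n-1$, so the factorization is automatically minimal and the bounded faces correspond bijectively to the factors. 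I would then build the rooted plane tree dual to this cactus (internal vertices = polygons, rooted at the leaf $h_1$) and feed it to Theorem~\ref{thm:bijection}, which asserts that such a tree is the image of a genuine minimal factorization of $(1\,2\ldots n)$. Finally, by Lemma~\ref{lem:increasing} each factor read off the tree is the unique increasing cycle supported on its set of $t$-leaves, and that set is exactly the vertex set of the matching polygon; since an increasing cycle is determined by its support, the factors coincide with the given $\sigma_1,\ldots,\sigma_m$.

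The last sentence of the theorem then follows for free: the circle diagram, and hence its dual rooted tree, is uniquely determined by the unordered set of cycles, so by Theorem~\ref{thm:bijection} there is a single factorization equivalence class built from $\{\sigma_i\}$, i.e.\ all such factorizations are equivalent. I expect the main obstacle to be the rigorous identification of the two pictures---proving that ``non-crossing plus simply connected'' is equivalent to ``the polygon-adjacency graph is a tree dual to a Theorem~\ref{thm:bijection} tree of the correct degree sequence''---together with the bookkeeping that makes the contour walk on the cactus agree with the tree-circumnavigating walk. A clean way to package this while avoiding an abstract duality is to make the reconstruction explicit by induction: the cactus always has a leaf polygon attached at a single cut vertex whose remaining vertices are consecutive on the circle among those still present, and peeling this ``ear'' off as the outermost factor removes those vertices, leaves an increasing cycle on the rest, and preserves~\ref{itm:cover_all}--\ref{itm:simply_conn}, so the induction closes.
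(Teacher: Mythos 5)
Your proposal follows essentially the same route as the paper: its Section~\ref{sec:struct} proves the theorem by exhibiting a correspondence between inscribed cacti satisfying conditions \ref{itm:cover_all}--\ref{itm:simply_conn} and the rooted plane trees of Theorem~\ref{thm:bijection} (polygons become internal vertices and vice versa, with the contour walk around the cactus matching the tree-circumnavigating walk), and then invokes Theorem~\ref{thm:bijection} and Lemma~\ref{lem:increasing}, exactly as you do. Your Euler-characteristic computation and the ``ear-peeling'' induction are reasonable ways of firming up details that the paper itself deliberately leaves informal (the one small correction: polygons sharing a circle vertex correspond to a \emph{path} of internal vertices in the tree, not necessarily to pairwise adjacent ones).
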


\begin{remark}
  Either requirement number~\ref{itm:cover_all} or requirement number~\ref{itm:simply_conn}
  can be substituted by the condition that
  \begin{equation}
    \label{eq:sizes}
    1 + \sum_{j=1}^m (|\sigma_j|-1) = n.
  \end{equation}
\end{remark}

\begin{example}
  The cycles $\{ (1\,4\,5),\ (1\,3),\ (2\,4) \}$ cannot be arranged
  into a factorization since the curves connecting $(1\,3)$ and
  $(2\,4)$ cannot be drawn inside a circle without intersecting.  The
  cycles $\{(1\,4\,5),\ (1\,2\,3),\ (3\,4) \}$ cannot be arranged into
  a factorization since the resulting circle drawing has a
  non-retractable loop $1, 4, 3, 1$.  The cycles $\{(1\,4\,5),\
  (1\,2),\ (2\,3) \}$ satisfy the conditions of
  Theorem~\ref{thm:struct_fact} and yield the factorization
  $(1\,4\,5)(1\,2)(2\,3)$.
\end{example}

\begin{remark}
  The result of drawing the cycles on the circle (after the circle has
  been erased) is a cactus, precisely the object that was used by
  Irving\cite{Irv06} to count factorizations.  We will prove
  Theorem~\ref{thm:struct_fact} in Section~\ref{sec:struct} by
  establishing a bijection between our trees and cacti drawn on a
  circle.  An informal explanation for the bijection is rather simple:
  while trees were obtained from the shuttle diagrams by shrinking
  vertical edges, the cacti are obtained by shrinking horizontal edges
  (provided the cycles are drawn as shown on
  Fig.~\ref{fig:shrinking}(b)).
\end{remark}

\section{Counting factorizations}
\label{sec:counting}

While Theorem~\ref{thm:gf_general} is a simple consequence of
Theorem~\ref{thm:bijection} and known counting results for trees
(\cite{ErdEth41}, \cite{Tut64} or \cite[Theorem 5.3.10]{StanleyV2}), we
provide a brief proof in order to introduce the methods used in the proof of
Theorem~\ref{thm:gf_heads_tails}.

\begin{proof}[Proof of Theorem~\ref{thm:gf_general}]
  We are going to enumerate the plane trees which have $\alpha_j$ vertices of
  degree $2j$.  The set of all such trees will be denoted by
  $\mathcal{T}_\balpha$, where $\balpha=(\alpha_2,\alpha_3,\ldots)$.

  \begin{figure}[t]
    \begin{center}
      \includegraphics[scale=0.8]{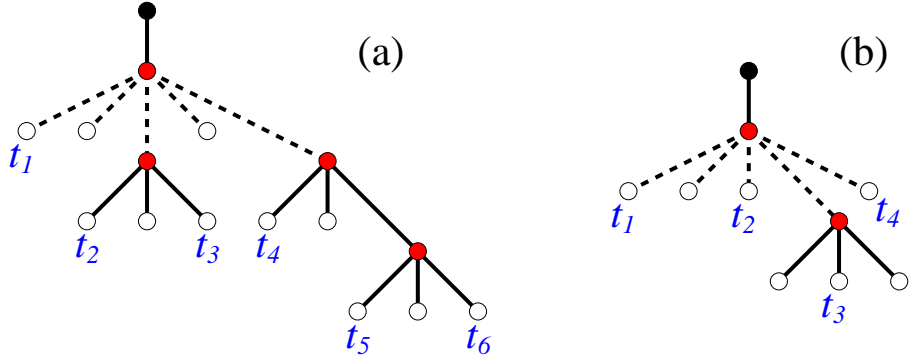}
    \end{center}
    \caption{(a) A tree with characteristic $\balpha=(3,1,0)$
      separates at the top internal vertex into five subtrees
      characterized by $\balpha_1=\balpha_2=\balpha_4=(0)$,
      $\balpha_3=(1,0)$ and $\balpha_5=(2,0)$. (b) A tree with
      characteristic $\balpha=(1,1,0)$.}
    \label{fig:recur1}
  \end{figure}

  To derive a recurrence relation for $|\mathcal{T}_\balpha|$ we break the
  tree at the top vertex adjacent to the root.  The top vertex has degree
  $2(d+1)$ for some $d\geq1$ and, when splitting the tree, it becomes the root
  of $2d+1$ subtrees $T_1,\dots,T_{2d+1}$, characterized by vectors
  $\balpha_1,\dots,\balpha_{2d+1}$ (some of them possibly empty).  Clearly
  $\balpha=\sum_{i=1}^{2d+1} \balpha_i +\be_d$ where $\be_d$ has $1$ in its
  $d$-th component\footnote{we remind the reader that the $k$-th component of
    vector $\balpha$ is $\alpha_{k+1}$} and zero elsewhere, representing the
  top vertex that was removed.  Figure \ref{fig:recur1}(a) shows a tree with
  characteristic $(3,1,\boldsymbol{0})$.  This tree splits at the top
  vertex, degree six ($d=2$), into five subtrees. The number of all possible
  trees with the top vertex of degree $2(d+1)$ is given by the number of
  combinations of subtrees, $\prod_{j=1}^{2d+1}|\mathcal{T}_{\balpha_j}|$,
  where $\sum_{j=1}^{2d+1} \balpha_j = \balpha-\be_d$.  Summing over the
  possible degrees of the top vertex establishes the recursion relation,
  \begin{equation}
    \label{eq:recur_N2}
    \widetilde{H}(\balpha) = |\mathcal{T}_\balpha|
    = \sum_{d\ge 1} \sum_{\balpha_1\cdots\balpha_{2d+1}}
    \prod_{j=1}^{2d+1}\widetilde{H}(\balpha)
    \delta_{\balpha_1+\ldots+\balpha_{2d+1},\,\balpha-\be_d} \ .
  \end{equation}
  Computing the generating function $\xi(\bx)$,
  equation~(\ref{eq:gen_fun_simple}), we recover (\ref{eq:gf_general}).

  To relate $\widetilde{H}(\balpha)$ to Catalan numbers (something
  important in applications, \cite{BerHarNov08}), we take
  $x_j=-r^{j-1}$, $j\ge 2$.  These particular values lead to
  \begin{equation*}
    x_2^{\alpha_2}x_3^{\alpha_3}\cdots=(-1)^{|\balpha|}r^{\langle
    \balpha\rangle}.
  \end{equation*}
  On the other hand, recursion (\ref{eq:gf_general}) implies that
  \begin{equation*}
    \widetilde{\xi}=1-r\widetilde{\xi}^3-r^2\widetilde{\xi}^5-\cdots,\qquad
    \mbox{ where }\quad
    \widetilde{\xi}(r)=\xi(-r, -r^2, \ldots).
  \end{equation*}
  The right-hand side is almost a geometric series; we multiply the
  equation by $1-r\widetilde{\xi}^2$ to arrive at
  \begin{equation*}
    r\widetilde{\xi}^2 + \widetilde{\xi} - 1 = 0.
  \end{equation*}
  This can be solved for $\widetilde{\xi}$ and results in the well known
  generating function of $(-1)^nc_n$.
\end{proof}

A recurrence relation for $\widetilde{H}_{\bh,\bt}(\balpha)$ can be established
in a similar manner.

\begin{proof}[Proof of Theorem~\ref{thm:gf_heads_tails}]
  We recap that we are counting the factorizations with a given number of
  heads and tails.  On a tree, a tail corresponds to a vertex of degree $2j$
  which has $j$ free $t$-labeled edges attached to it.  For example, on
  Fig.~\ref{fig:recur1}, there are $t=2$ tails.  Similarly a head is a degree
  $2j$ vertex with $j$ free $h$-labeled edges attached (we omitted $h$ labels
  from Fig.~\ref{fig:recur1} and other figures to avoid clutter).  Note that
  the top vertex can be both a tail and a head, although not simultaneously,
  at least for trees with more than one vertex.  The root counts as being
  $h$-labeled and is always free.  For example, the tree on
  Fig.~\ref{fig:recur1}(a) has the top vertex as its only head, $h=1$.  We
  also introduce a variable $h'$ counting all heads excluding the top vertex.
  We will refer to it as the {\em reduced head count\/} and for the tree on
  Fig.~\ref{fig:recur1}(a) it is $h'=h-1=0$, while for the tree on
  Fig.~\ref{fig:recur1}(b) $h'=h=1$.  We will first derive a recursion
  counting the trees with a given reduced head count and from there obtain the
  number of trees with full head count.

  The tail and (reduced) head count are further specialized to count the
  number of heads and tails of a certain degree.  Thus, in general, $\bh$,
  $\bh'$ and $\bt$ are infinite vectors with finitely many nonzero components.
  Let $\phi$ be a partial generating function with respect to the tail and
  reduced head count
  \begin{equation*}
    \phi(\balpha,\bv,\bu) =
    \sum_{\bh'=(0,0,\ldots)}^{\balpha} \sum_{\bt=(0,0,\ldots)}^{\balpha}
    \widetilde{H}_{\bh',\bt}(\balpha)
    u_2^{h_2'} v_2^{t_2}
    u_3^{h_3'} v_3^{t_3} \cdots, \qquad \phi(\bo,\bv,\bu) = 1.
  \end{equation*}

  To establish the recursion relation we again consider breaking the tree into
  subtrees $T_1, \dots , T_{2d+1}$ at the top vertex of degree $2(d+1)$,
  numbering the subtrees left to right.  As before, the subtrees are
  characterized by vectors $\balpha_1,\dots,\balpha_{2d+1}$.  We introduce a
  special notation for the sum of odd-indexed vectors and for the sum of
  even-indexed ones,
  \begin{equation}
    \label{eq:alpha_odd_even}
    \balpha^o = \sum_{j=0}^d \balpha_{2j+1}
    \qquad \balpha^e = \sum_{j=1}^d \balpha_{2j}.
  \end{equation}

  The reduced head count $\bh' = (h_2,h_3,\ldots)$ of the full tree
  can be obtained by summing the appropriate counts for the subtrees,
  namely
  \begin{equation}
    \label{eq:reduced_head_cnt}
    \bh'(T)=\bh'(T_1) + \sum_{j=1}^{d} \big(\bt(T_{2j})+ \bh'(T_{2j+1})\big).
  \end{equation}
  Note that for the even-numbered subtrees, we need to add the number
  of tails rather than heads.  This corresponds to a change in the
  labeling of the leaves on the subtrees with even index.  On subtrees
  with odd index the first (leftmost) leaf is always $t$-labeled,
  while the first leaf of an even-numbered subtree is $h$-labeled,
  see Fig.~\ref{fig:recur1}(b) for an example.

  For the tail count of the complete tree, the procedure is analogous, with
  the addition of the possible contribution of the top vertex.  The top vertex
  is a tail if all the odd subtrees are empty, i.e. $\balpha_{2j+1} = \bo$,
  $j=0,\ldots,d$.  Figure \ref{fig:recur1}(b) shows a tree where the top
  vertex is a tail.  Therefore,
  \begin{equation*}
    \bt(T) = \bt(T_1) + \sum_{j=1}^{d} \big(\bh'(T_{2j}) + \bt(T_{2j+1})\big)
    + \delta_{\balpha^o, \bo} \be_d.
  \end{equation*}

  Consequently $\phi(\balpha,\bv,\bu)$ is expressed in terms of functions
  $\phi(\balpha_j,\bv,\bu)$ generated by the subtrees,
  \begin{multline}
    \label{eq:phi}
    \phi(\balpha,\bv,\bu) = \sum_{d\ge 1} \sum_{\balpha_1\cdots\balpha_{2d+1}}
    \phi(\balpha_1,\bv,\bu)
    \prod_{j=1}^{d}\phi(\balpha_{2j},\bu,\bv)\phi(\balpha_{2j+1},\bv,\bu)
    \\ \times
    \left(1-(1-v_{d+1})\delta_{\balpha^o,\bo}\right) \,
    \delta_{\balpha^o+\balpha^e,\,\balpha-\be_d}.
  \end{multline}
  It is important to observe that the functions $\phi$ with
  even-indexed vectors $\balpha_{2j}$ have their arguments $\bu$ and
  $\bv$ switched around.  Calculating the generating function
  \begin{equation*}
    f(\bx,\bu,\bv) = \sum_{\balpha} \phi(\balpha,\bv,\bu) x_2^{\alpha_2}
    x_3^{\alpha_3} \cdots \, ,
  \end{equation*}
  we recover recurrence relation (\ref{eq:reduced_rec}).

  The complete head count can be obtained from the appropriate counts
  for the subtrees in a slight variation of
  (\ref{eq:reduced_head_cnt}),
  \begin{equation}
    \label{eq:full_head_cnt}
    \bh(T) = \bh'(T_1)
    + \sum_{j=1}^{d} \big(\bt(T_{2j}) + \bh'(T_{2j+1})\big)
    + \delta_{\balpha^e, \bo}\, \be_d,
  \end{equation}
  where $\balpha^e$ was defined in equation~(\ref{eq:alpha_odd_even}).

  The partial generating function with respect to the full head count
  is then
  \begin{multline*}
    \psi(\balpha,\bv,\bu) =
    \sum_{\bh=(0,0,\ldots)}^{\balpha} \sum_{\bt=(0,0,\ldots)}^{\balpha}
    \widetilde{H}_{\bh,\bt}(\balpha)
    u_2^{h_2} v_2^{t_2} u_3^{h_3} v_3^{t_3} \cdots \\
    = \sum_{d\ge 1} \sum_{\balpha_1\cdots\balpha_{2d+1}}
    \phi(\balpha_1,\bv,\bu)
    \prod_{j=1}^{d}\phi(\balpha_{2j},\bu,\bv)\phi(\balpha_{2j+1},\bv,\bu)
    \\ \times
    \left[1-(1-v_{d+1})\delta_{\balpha^o,\bo}
      - (1-u_{d+1})\delta_{\balpha^e,\bo}\right] \,
    \delta_{\balpha^o+\balpha^e,\,\balpha-\be_d}
  \end{multline*}
  Opening the square brackets, using the recursion (\ref{eq:phi}) for $\phi$, and the fact that $\balpha^e=\bo$ implies
  $\phi(\balpha_{2j},\bu,\bv) = 1$, we obtain
  \begin{multline*}
    \psi(\balpha,\bv,\bu)
    = \phi(\balpha,\bv,\bu)
    - \sum_{d\ge 1} \sum_{\balpha_1\cdots\balpha_{2d+1}}
    \prod_{j=0}^{d}\phi(\balpha_{2j+1},\bv,\bu)
    (1-u_{d+1})\, \delta_{\balpha^o,\,\balpha-\be_d}.
  \end{multline*}

  Calculating the full generating function $g(\bx,\bu,\bv)$ we obtain
  \begin{equation*}
    g(\bx,\bv,\bu) = f - \sum_{d\geq1} x_{d+1} (1-u_{d+1}) f^{d+1},
  \end{equation*}
  which is the same as (\ref{eq:rec_full_unsym}) after the substitution
  $n=d+1$.  We now transform this relation to form (\ref{eq:rec_full_sym}),
  which confirms that, in contrast to $f(\bx,\bv,\bu)$, the generating function
  $g(\bx,\bv,\bu)$ is symmetric with respect to the exchange of $\bu$ and
  $\bv$.  We exchange $\bu$ and $\bv$ in (\ref{eq:reduced_rec}) to obtain a
  recursion for $\hat{f}$,
  \begin{equation*}
    \hat{f} = 1
    + \sum_{n\geq2} x_n \left(\hat{f}^n - 1 + u_n\right) f^{n-1},
  \end{equation*}
  multiply it by $f$ and rearrange,
  \begin{equation*}
    f\hat{f} = f - \sum_{n\geq2} x_n(1-u_n) f^n
    + \sum_{n\geq2} x_n \hat{f}^n f^n,
  \end{equation*}
  from which (\ref{eq:rec_full_sym}) immediately follows.
\end{proof}

\section{Trees and cacti}
\label{sec:struct}

As seen in Section~\ref{sec:viz} and Theorem~\ref{thm:bijection}, a
factorization can be visualized as a tree with vertices representing
factors.  An alternative visualization involves drawing cycles as
(curvilinear) polygons inscribed in a circle, leading to an inscribed
cactus.  As we mentioned earlier, the two models can be viewed as two
ways of removing redundant information from the shuttle diagrams of
Section~\ref{sec:viz}.  The trees are obtained by shrinking vertical
edges (shuttles), while cacti are obtained by shrinking horizontal
edges.

In this Section we will give a direct mapping between trees and cacti
inscribed on a circle.  Namely, we will show that cacti satisfying
conditions \ref{itm:cover_all}-~\ref{itm:simply_conn} of
Theorem~\ref{thm:struct_fact} are in one-to-one correspondence with
the rooted plane trees of Theorem~\ref{thm:bijection}.  Then the
statement of Theorem~\ref{thm:struct_fact} follows directly from
Theorem~\ref{thm:bijection} and Lemma~\ref{lem:increasing}.  We will
keep our exposition slightly informal since a formal proof of a
similar result for transpositions is available, for example, in
\cite{EdeSch_mtamkik62} (see also an exposition in Section 4.5 of
\cite{Berge_principles}).

\subsection{From an inscribed cactus to a tree}

\begin{figure}[t]
  \centering
  \includegraphics[scale=0.6]{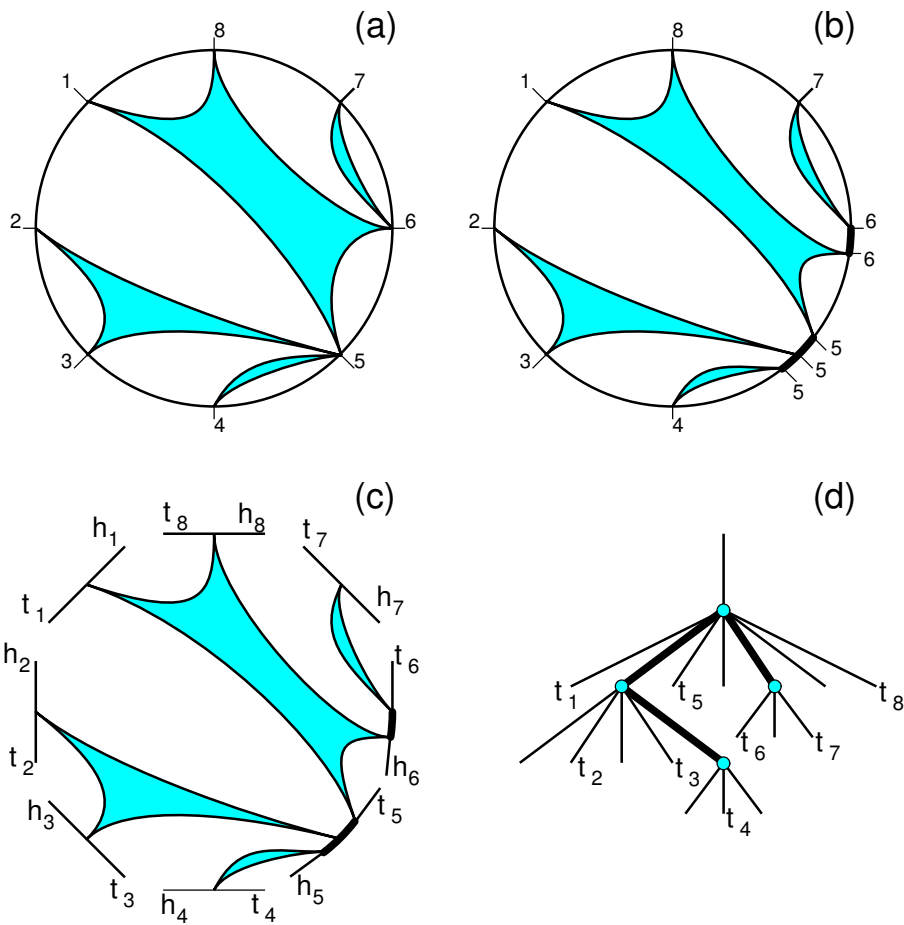}
  \caption{Turning a cactus into a tree: (a) original cactus inscribed
  in a circle, (b) adding edges (bold) between touching corners of
  polygons, (c) cutting circle between labels $j$ and $j+1$ and (d)
  shrinking polygons to produce internal vertices of the tree.}
  \label{fig:cactus_to_tree}
\end{figure}

To transform an inscribed cactus into a tree we start by separating
on the circle the touching corners of polygons.  This creates more
edges (shown in bold lines on Fig.~\ref{fig:cactus_to_tree}).  The new
vertices on the circle will receive the same label as the original
vertex.  Thus, if $k$ polygons touched at vertex $j$ there are now $k$
vertices labeled $j$ in consecutive positions around the circle.  The
new edges will become the internal edges (i.e. not ending in a degree
one vertex) of the resulting graph.  We observe that after this step
no two polygons have any points in common.  Also, each corner of each
polygon is connected to exactly two edges.

In the next step, every arc or the circle connecting vertices $j$ and
$j+1$ (by $n+1$ we understand $1$) is cut in half.  The new vertices receive
labels $t_j$ and $h_{j+1}$, so that $t_j$ is connected to vertex $j$
and $h_{j+1}$ is connected to $j+1$, see an example on
Fig.~\ref{fig:cactus_to_tree}, part (c).

Finally, all polygons are shrunk to form vertices,
Fig.~\ref{fig:cactus_to_tree}(d).  Since the original cactus was
simply connected, the result is a tree with (by construction) the
correct vertex degrees to satisfy Theorem~\ref{thm:bijection}.  Thus
it corresponds to a minimal factorization of the full $n$-cycle.

\subsection{From a tree to an inscribed cactus}

\begin{figure}[t]
  \centering
  \includegraphics[scale=0.6]{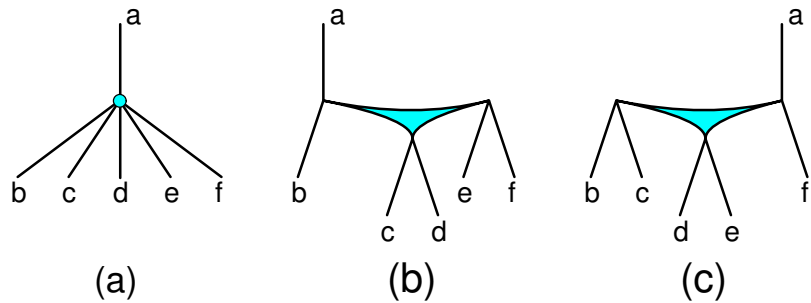}
  \caption{Expanding a tree vertex into a polygon.  A vertex of degree
    $2m$ becomes a polygon with $m$ corners.  Each corner is connected
    to a pair of edges.  If the vertex has $t$-type, we group the
    edges in pairs according to figure (b).  Vertices of $h$-type
    become polygons depicted on figure (c).}
  \label{fig:tree_to_cactus}
\end{figure}

To make a cactus out of a tree we essentially reverse the procedure
outlined in the previous section.  The only difficult point is the
``inflation'' of internal vertices into polygons.  Each vertex of
degree $2m$ will become a polygon with $m$ corners and two edges
attached to each corner.  Since the circular ordering of the edges
around the polygon is determined by the ordering around the tree
vertex, there are only two possibilities to group edges into pairs
(depicted in Fig.~\ref{fig:tree_to_cactus}).

To decide which vertex becomes which type of polygon, we determine the
``type'' of a vertex $v$.  Imaging a counter-clockwise walk starting
at the leaf $h_1$ and following the exterior of the tree.  If the last
leaf visited by the walk prior to coming to $v$ \emph{for the first time}
was a $t$-leaf, the vertex $v$ is of type $h$.  Otherwise, it is of
type $t$.  Note that this definition works for leafs as well as
internal vertices, and produces for them the ``correct'' type.  The
type is also preserved in building a tree from the top down.  Having
determined the type of a vertex, the vertices of type $t$ become
polygons of the type depicted on Fig.~\ref{fig:cactus_to_tree}(b) and
vertices of type $h$ become polygons similar to
Fig.~\ref{fig:cactus_to_tree}(c).

From this point, we merge pairs of vertices $t_j$ and $h_{j+1}$ to
form the circle and then shrink the internal edges, obtaining an
inscribed cactus.

\section{Conclusions and outlook}

The simple pictorial bijection introduced in
Theorem~\ref{thm:bijection} has allowed us to perform an in-depth
analysis of the set of inequivalent minimal factorizations of the
$n$-cycle.  The next logical step is to apply similar ideas to
inequivalent minimal transitive factorizations of a general
permutation.  Our preliminary explorations showed that the ideas of
the present manuscript provide a method for deriving a recursion for
the generating function for any finite $m$, where $m$ is the number of
cycles in the cycle representation of the target permutation ($m=1$
corresponds to an $n$-cycle).  We have also found \cite{BerKui_prep10}
that this question is directly applicable in computing non-linear
moments of transmission probability through a chaotic quantum system.
However, for the above application some information on the number of
tails and heads in a factorization is again required.

\section*{Acknowledgment}
The authors acknowledge the discussions they had with P.~Lima-Filho
and F.~Sottile and thank them for making useful suggestions.  The
authors are extremely grateful to J.~Irving for sending us a copy of
manuscript \cite{Spr96}.

\bibliographystyle{ieeetr} \bibliography{factor_cycle}

\end{document}